\newtheorem{theorem}{Theorem}
\newtheorem{lemma}{Lemma}
\newtheorem{definition}{Definition}
\newtheorem{remark}{Remark}
\newtheorem{proposition}{Proposition}
\newcommand{\leqnomode}{\tagsleft@true}
\newcommand{\reqnomode}{\tagsleft@false}
\begin{document}

\title{Collision-avoiding in
the singular Cucker-Smale model with
nonlinear velocity couplings.}   
\author{Ioannis Markou}         
\date{March 13 2018}    
\maketitle

\begin{abstract}
Collision avoidance is an interesting feature of the Cucker-Smale
(CS) model of flocking that has been studied in many works, e.g.
\cite{AhChHaLe, AgIlRi, CaChMuPe, CuDo1, CuDo2, MuPe, Pe1, Pe2}. In
particular, in the case of singular interactions between agents, as
is the case of the CS model with communication weights of the type
$\psi(s)=s^{-\alpha}$ for $\alpha \geq 1$, it is important for
showing global well-posedness of the underlying particle dynamics.
In \cite{CaChMuPe}, a proof of the non-collision property for
singular interactions is given in the case of the linear CS model,
i.e. when the velocity coupling between agents $i,j$ is
$v_{j}-v_{i}$. This paper can be seen as an extension of the
analysis in \cite{CaChMuPe}. We show that particles avoid collisions
even when the linear coupling in the CS system has been substituted
with the nonlinear term $\Gamma(\cdot)$ introduced in \cite{HaHaKi}
(typical examples being $\Gamma(v)=v|v|^{2(\gamma -1)}$ for $\gamma
\in (\frac{1}{2},\frac{3}{2})$), and prove that no collisions can
happen in finite time when $\alpha \geq 1$. We also show uniform
estimates for the minimum inter-particle distance, for a
communication weight with expanded singularity
$\psi_{\delta}(s)=(s-\delta)^{-\alpha}$, when $\alpha \geq 2\gamma$,
$\delta \geq 0$.

\end{abstract}

\textbf{Keywords}: Nonlinear Cucker-Smale system, emergent behavior,
collision avoidance, singular communication weight.

\textbf{2010 MR Subject Classification}: 82C22, 92D50.

\section{Introduction.}

A Cucker-Smale (CS) type of model deals with an interacting system
of $N$ autonomous, self-driven particles (agents). The main model
postulate is that the agents adjust their velocities by taking a
weighted average of their relative velocities to all other agents.
If we let $(x_{i},v_{i})\in \mathbb{R}^d \times \mathbb{R}^d$ be the
phase space position of the i'th particle for $1 \leq i \leq N$, and
$d\geq 1$ be the physical dimension, the dynamics of the particle
motion is governed by the system:

\begin{align} \label{CS} \left\{ \begin{array}{ll} \frac{d}{dt}x_{i}(t)&=v_{i}(t),\quad i=1
\ldots, N, \quad t>0 ,
\\ \\ \frac{d}{dt}v_{i}(t)&=\frac{1}{N}\sum
\limits_{j} \psi(|x_{i}-x_{j}|)(v_{j}-v_{i}) \quad,
\end{array} \right.
\end{align}
given some initial data $(x_{i}(0),v_{i}(0))=(x_{i0},v_{i0})$,
$i=1,\ldots,N$. Throughout this paper, the symbol $\sum \limits_{i}$
is used as an abbreviation of $\sum \limits_{1 \leq i \leq N}$. The
function $\psi(r)$, $r\geq 0$, quantifies the interaction between
two agents and it is to be referred as the communication weight of
the interaction. It is positive, nonincreasing, and vanishes as $r
\to \infty$. We observe that in our model, $\psi(\cdot)$ depends on
the metric distance between two agents. The main question that
arises in the study of \eqref{CS} is whether the system emerges to a
\textit{flock}, i.e., all the particle velocities align
asymptotically in time and the agents stay connected forever. The
prototype example in the CS model was $\psi(r)=(1+r^{2})^{-\beta}$,
for $\beta \geq 0$. In \cite{CuSm1,CuSm2,HaLiu} it was shown that
flocking is guaranteed if $\beta \leq \frac{1}{2}$, and if $\beta >
\frac{1}{2}$ then the system might converge to a flock only under
certain conditions on the initial positions and velocities. The
phase transition that happens when $\beta=\frac{1}{2}$ is typical of
the system \eqref{CS} and supports the more general result that when
weight $\psi(\cdot)$ has a non integrable tail (i.e.
$\int^{\infty}\psi(s)ds=\infty$) then flocking occurs regardless of
the initial configuration of agents.

After its introduction in \cite{CuSm1,CuSm2} (based on an earlier
idea from \cite{ViCzBJco}), research on the CS model took several
different routes. The original flocking results were simplified and
improved in \cite{HaLiu,HaTad}. The CS system was studied in the
presence of Rayleigh friction forces in \cite{HaHaKim}, as well as
other repulsion/alignment/turning forces \cite{AgIlRi}. The effect
of a flock leader in emergent behavior was considered in \cite{Sh}.
The model was also studied with extra random noise terms in
\cite{CuMo,HaLeLe,ToLiYa}. S. Motsch and E. Tadmor proposed a model
that resolves some of the drawbacks of the CS system by normalizing
the communication weights in \cite{MoTad1} and established flocking
conditions. The CS system was studied with delay terms in
\cite{ChHa,ErHaSu}.

An interesting variation to system \eqref{CS} was proposed in
\cite{HaHaKi} and describes the particle system where the linear
coupling term $v_{j}-v_{i}$ is substituted by a nonlinear vector
$\Gamma(v_{j}-v_{i}):\mathbb{R}^d \to \mathbb{R}^d$, i.e.
\begin{align} \label{NL-CS}\left\{ \begin{array}{ll} \frac{d}{dt}x_{i}(t)&=v_{i}(t),\quad i=1
\ldots, N, \quad t>0 ,
\\ \\ \frac{d}{dt}v_{i}(t)&=\frac{1}{N}\sum
\limits_{j} \psi(|x_{i}-x_{j}|)\Gamma(v_{j}-v_{i}) .
\end{array} \right.
\end{align}
The justification given in \cite{HaHaKi} of this nonlinear version
of \eqref{CS} lies in the fact that there seems to be no underlying
physical principle that requires most alignment models to be linear,
other than a modeling convenience. It is therefore of paramount
importance to know that model \eqref{CS} is robust under small
variations in all parameters, including the velocity couplings. We
refer to system \eqref{NL-CS} from now on as NL CS (nonlinear CS).
The continuous coupling vector $\Gamma(v_{j}-v_{i})$ that appears in
\eqref{NL-CS} has the following properties:
\begin{itemize}
\item (A1) (skew symmetry) $\Gamma(-v)=-\Gamma(v)$ for $v \in
\mathbb{R}^d$.
\item (A2) (coercivity) There exists some $C_{1}>0$ and
$\gamma \in (\frac{1}{2},\frac{3}{2})$ such that $\langle \Gamma(v),
v\rangle \geq C_{1} |v|^{2 \gamma}$. Here by $\langle \cdot , \cdot
\rangle$ we denote the inner product in $\mathbb{R}^d$ and with
$|\cdot|$ its induced norm.
\end{itemize}
System \eqref{NL-CS} exhibits similar phase transition properties as
\eqref{CS}. For a communication weight with a non integrable tail
and under assumptions $(A1)-(A2)$, it can be shown that flocking
occurs for $\gamma \in (\frac{1}{2},\frac{3}{2})$ with a rate that
depends explicitly on the value of $\gamma$. In more detail, when
$\gamma \in (\frac{1}{2},1)$ we have flocking that occurs in finite
time $T^{*}<\infty$, algebraically fast. If $\gamma \in
(1,\frac{3}{2})$ we have emergence of a flock in infinite time
$T^{*}=\infty$, with algebraic decay rate. Finally, the case
$\gamma=1$ reduces to the linear case with $T^{*}=\infty$, and
flocking that happens at an exponential decay rate.

A question that requires special investigation is the presence of
collisions between agents. The original CS systems \eqref{CS} and
\eqref{NL-CS} with the weights we just mentioned does not exclude
the possibility of collisions. For obvious reasons, the modeling of
alignment in animal flocks, aerial vehicles, unmanned drones etc.
should in many cases incorporate a mechanism for avoiding
collisions. The way to design systems like that is by introducing an
interaction that becomes singular when two particles collide. This
interaction might have the form of an extra repulsion forcing term,
like in \cite{AhChHaLe, AgIlRi, CuDo1, CuDo2}, or it might simply be
a communication weight that is singular at the origin, e.g.
\cite{CaChMuPe, MuPe, Pe1, Pe2}. In this work our focus shifts to
the latter scenario.

The purpose of this article is to study the problem of collision
avoidance for the NL CS model \eqref{NL-CS}. We prove that for all
the cases where flocking is possible, we have the absence of
collisions in finite time for any interaction of the type
$\psi(s)=s^{-\alpha}$, with $\alpha \geq 1$. This result is in
complete agreement with the linear case treated in \cite{CaChMuPe}.
Our approach shows that the methodology used in \cite{CaChMuPe} is
not just specific to the linear case but can be easily adopted to a
non linear scenario. It also serves as a further indication of the
robustness of the choice of linear couplings in the classical CS
model.

Furthermore, we derive uniform estimates for the general case of a
weight with expanded singularity $\psi(s)=(s-\delta)^{-\alpha}$,
$\delta \geq 0$. We use distance functions of the type
$\mathcal{L}^{\beta}(t)=\frac{1}{N(N-1)}\sum \limits_{i\neq
j}(|x_{i}(t)-x_{j}(t)|-\delta)^{-\beta}$ for some appropriately
chosen $\beta=\beta(\alpha, \gamma) >0$, and prove that
$\mathcal{L}^{\beta}(t)\leq O(T)$, $\forall t \in[0,T]$, when
$\alpha \geq 2 \gamma$. This gives an estimate for the minimum
interparticle distance like $\inf \limits_{i \neq
j}|x_{i}(t)-x_{j}(t)|>O((T N^2)^{-\frac{1}{\beta}})$, which is
enough to conclude the well-posedness of the dynamics for a fixed
number $N$ of agents (given that $\Gamma(\cdot)$ is also Lipshitz).
Unfortunately, this estimate is useless as $N\to \infty$ and leaves
the question of passage to the mean field equation (for singular
communication weights) still open, see e.g. \cite{HaJa,Ja}.

The rest of this paper is structured as follows. In Section 2 we
briefly review the theory for problem \eqref{NL-CS} and the flocking
result presented in \cite{HaHaKi}. In the end of Section 2 we
present the main result of this paper and give its proof in Section
3. Finally, in Section 4 we give and prove the uniform estimates in
the case of a communication weight $\psi(s)=(s-\delta)^{-\alpha}$,
for $\alpha \geq 2\gamma$ and $\delta \geq 0$.

\section{Preliminaries and Main result.}

In what follows, we denote with $x(t)$, $v(t)$ the position and
velocity of the whole $N$ particle system, i.e.
$x(t):=(x_{1},\ldots,x_{N})$ and $v(t):=(v_{1},\ldots,v_{N})$. We
also denote with $(x(t),v(t))$ a solution to the CS system if
$x(t)$, $v(t)$ solve the NL system at time $t$. In the same spirit,
the notation
$(x_{0},v_{0}):=(x_{10},\ldots,x_{N0},v_{10},\ldots,v_{N0})$
represents the vector of initial data. We now give a formal
definition of flocking for a particle system $(x(t),v(t))$.
\begin{definition}[Asymptotic flocking.] A given particle system $(x(t),v(t))$
is said to converge to a flock, iff the following two conditions
hold,
\begin{align} \label{flock} \sup_{t>0} \sup_{i,j}|x_{i}(t)-x_{j}(t)|<\infty , \qquad
\lim \limits_{t\to \infty}\sup_{i,j}|v_{i}(t)-v_{j}(t)| =0
.\end{align}
\end{definition}
We need to keep in mind that the definition we just gave is
independent of the configuration of initial velocities and positions
$(x_{0},v_{0})$. This definition corresponds to the so called
\textit{unconditional} flocking scenario. If flocking holds for a
certain class of initial configurations then we speak of
\textit{conditional} flocking.

Now that we stated the definition of flocking, we may proceed with
the invariants of particle dynamics for system \eqref{NL-CS}. For
this, we define the first three moments of particle motion,
\begin{equation} \label{mom-def} m_{0}(t):=\sum \limits_{i}1,
\quad m_{1}(t)=\sum \limits_{i} v_{i}, \quad m_{2}(t):=\sum
\limits_{i}|v_{i}|^2 .\end{equation} The following lemma shows how
these moments propagate in time.

\begin{lemma}[propagation of moments (see \cite{HaHaKi})] Assume
that the conditions $(A1)$-$(A2)$ hold. Suppose also that
$(x(t),v(t))$ is a solution to the NL CS system. Then, the three
velocity moments satisfy
\begin{equation}\label{mom-eq}
\frac{d}{dt}m_{0}(t)=\frac{d}{dt}m_{1}(t)=0, \qquad
\frac{d}{dt}m_{2}(t)\leq -\frac{C_{1}}{N}\sum
\limits_{i,j}\psi(|x_{i}-x_{j}|)|v_{i}-v_{j}|^{2\gamma} .
\end{equation}
\end{lemma}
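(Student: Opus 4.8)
The plan is to differentiate each moment in time, substitute the velocity equation from \eqref{NL-CS}, and then exploit the symmetry structure of the coupling. The first identity is immediate: $m_{0}(t)=\sum_{i}1=N$ is constant, so $\frac{d}{dt}m_{0}=0$ with nothing to prove. The remaining two identities are standard energy-type computations that both hinge on relabeling the summation indices $i\leftrightarrow j$ together with assumptions (A1)--(A2).

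For $m_{1}$, I would differentiate and plug in the dynamics to obtain
\[
\frac{d}{dt}m_{1}=\frac{1}{N}\sum_{i,j}\psi(|x_{i}-x_{j}|)\,\Gamma(v_{j}-v_{i}).
\]
The idea is to swap the dummy indices $i\leftrightarrow j$ and note that $\psi(|x_{i}-x_{j}|)$ is invariant under this swap, while $\Gamma(v_{j}-v_{i})=-\Gamma(v_{i}-v_{j})$ by the skew-symmetry (A1). This shows the double sum equals its own negative and hence vanishes, giving $\frac{d}{dt}m_{1}=0$.

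For $m_{2}$, differentiation yields
\[
\frac{d}{dt}m_{2}=\frac{2}{N}\sum_{i,j}\psi(|x_{i}-x_{j}|)\,\langle v_{i},\Gamma(v_{j}-v_{i})\rangle.
\]
The key step is to symmetrize this sum: relabeling $i\leftrightarrow j$ and applying (A1) as above produces a second representation of the same quantity with $v_{i}$ effectively replaced by $-v_{j}$, and averaging the two representations collapses the expression to the symmetric form
\[
\frac{d}{dt}m_{2}=-\frac{1}{N}\sum_{i,j}\psi(|x_{i}-x_{j}|)\,\langle v_{j}-v_{i},\Gamma(v_{j}-v_{i})\rangle.
\]
At this point the coercivity assumption (A2) applies directly to the inner product, $\langle\Gamma(v_{j}-v_{i}),v_{j}-v_{i}\rangle\geq C_{1}|v_{j}-v_{i}|^{2\gamma}$, and since $|v_{j}-v_{i}|=|v_{i}-v_{j}|$ the claimed inequality \eqref{mom-eq} follows.

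I do not expect a genuine obstacle here, as this is a routine dissipation estimate. The only point requiring care is the bookkeeping in the symmetrization: one must track the sign flips coming from (A1) correctly so that the kinetic-energy balance reduces to a single coercive quadratic form rather than leaving a residual cross term. The legitimacy of differentiating under the finite sum is implicitly guaranteed by the existence theory on the time interval under consideration; for singular $\psi$ this is only delicate once a collision occurs, which is precisely the scenario the main theorem later excludes.
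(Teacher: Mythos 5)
Your argument is correct and follows essentially the same route as the paper: the constancy of $m_{0}$ is trivial, $m_{1}$ is handled by the $i\leftrightarrow j$ relabeling combined with (A1), and $m_{2}$ by symmetrizing the dissipation sum into $-\frac{1}{N}\sum_{i,j}\psi(|x_{i}-x_{j}|)\langle\Gamma(v_{j}-v_{i}),v_{j}-v_{i}\rangle$ before applying (A2). Nothing is missing.
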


\begin{proof}
We give the short proof for completion. The first equation is
trivial since $m_{0}(t)=N$. The equation for $m_{1}(t)$ follows from
the symmetry of $\psi(\cdot)$ and $(A1)$,
\begin{align*}\dot{m}_{1}(t)&=\sum \limits_{i}\dot{v}_{i}(t)=\frac{1}{N}\sum
\limits_{i,j}\psi(|x_{i}-x_{j}|)\Gamma(v_{j}-v_{i})\stackrel{i
\leftrightarrow j}{=} \frac{1}{N}\sum
\limits_{i,j}\psi(|x_{i}-x_{j}|)\Gamma(v_{i}-v_{j})\\
&\stackrel{(A1)}{=}-\frac{1}{N}\sum
\limits_{i,j}\psi(|x_{i}-x_{j}|)\Gamma(v_{j}-v_{i})=0,
\end{align*}
where $\sum \limits_{i,j}=\sum \limits_{i} \sum \limits_{j}$. For
the second moment $m_{2}(t)$ we have
\begin{align*} \dot{m}_{2}(t)&=\frac{d}{dt}\sum \limits_{i}|v_{i}(t)|^2
=2\sum \limits_{i}\langle \dot{v}_{i},v_{i}\rangle =\frac{2}{N}\sum
\limits_{i,j}\psi(|x_{i}-x_{j}|)\langle
\Gamma(v_{j}-v_{i}),v_{i}\rangle \\& \stackrel{i \leftrightarrow
j}{=}\frac{2}{N}\sum \limits_{i,j}\psi(|x_{i}-x_{j}|)\langle
\Gamma(v_{i}-v_{j}),v_{j}\rangle \stackrel{(A1)}{=}- \frac{2}{N}\sum
\limits_{i,j}\psi(|x_{i}-x_{j}|)\langle
\Gamma(v_{j}-v_{i}),v_{j}\rangle\\&=-\frac{1}{N}\sum
\limits_{i,j}\psi(|x_{i}-x_{j}|)\langle
\Gamma(v_{j}-v_{i}),v_{j}-v_{i}\rangle  \stackrel{(A2)}{\leq}
-\frac{C_{1}}{N}\sum
\limits_{i,j}\psi(|x_{i}-x_{j}|)|v_{i}-v_{j}|^{2\gamma}.
\end{align*}
\end{proof}
A direct consequence of the invariance of the first moment is that
the bulk velocity $v_{c}(t):=\frac{1}{N} \sum \limits_{i}v_{i}$
remains constant in time, i.e., $v_{c}(t)=v_{c}(0)$. For the mean
position vector $x_{c}(t):=\frac{1}{N} \sum \limits_{i}x_{i}$, we
easily see that $x_{c}(t)=v_{c}(0)t+x_{c}(0)$. Based on these
observations we can define the standard deviation of positions and
velocities for a group of $N$ agents by
\begin{align*} \sigma_{x}(t):=\sqrt{\frac{1}{N}\sum
\limits_{i}|x_{i}(t)-x_{c}(t)|^2}, \qquad
\sigma_{v}(t):=\sqrt{\frac{1}{N}\sum
\limits_{i}|v_{i}(t)-v_{c}(t)|^2},
\end{align*}
and use them to study the flocking behavior in \eqref{NL-CS}.
Indeed, the flocking conditions in definition \eqref{flock} are
equivalent to showing $\sup \limits_{t>0} \sigma_{x}(t)<\infty$ and
$\lim \limits_{t \to \infty}\sigma_{v}(t)=0$. The main flocking
result for the NL CS system was given in \cite{HaHaKi}.
\begin{proposition} We assume that assumptions $(A1)$-$(A2)$ hold and that
$(x(t),v(t))$ is a smooth solution to the NL CS system \eqref{NL-CS}
with the following constraint on initial configurations
\begin{equation} \label{flock-cond} \sigma_{v}^{3-2\gamma}(0) \leq C_{2} (3-2\gamma)
\int_{\sigma_{x}(0)}^{\infty}\psi(2\sqrt{N}s)\, ds , \end{equation}
for some $C_{2}=C_{2}(\gamma, C_{1}, N)>0$. Then, for $\gamma \in
(\frac{1}{2},\frac{3}{4})$ the particle system emerges to a flock.
\end{proposition}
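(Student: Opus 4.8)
The plan is to reduce the $N$-body dynamics to a closed pair of differential inequalities for the position and velocity standard deviations $\sigma_x$ and $\sigma_v$, and then run a Lyapunov argument in the spirit of the Ha--Liu proof, adapted to the nonlinear coercivity exponent $\gamma$. The two properties to be extracted are exactly those in Definition 1, namely $\sup_t\sigma_x<\infty$ and $\sigma_v\to0$.

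First I would record two elementary facts: the identity $\sum_{i,j}|v_i-v_j|^2 = 2N^2\sigma_v^2$, and the Cauchy--Schwarz bound $\frac{d}{dt}\sigma_x \le \sigma_v$, obtained from $\frac{d}{dt}\sigma_x^2 = \frac2N\sum_i\langle x_i-x_c, v_i-v_c\rangle \le 2\sigma_x\sigma_v$. Next, starting from the dissipation estimate \eqref{mom-eq} of the Lemma, I would produce a lower bound for $\sum_{i,j}\psi(|x_i-x_j|)|v_i-v_j|^{2\gamma}$: since $\psi$ is nonincreasing and $|x_i-x_j|\le 2\sqrt N\sigma_x$, one factors out $\psi(2\sqrt N\sigma_x)$; and since $t\mapsto t^\gamma$ is concave with $0^\gamma=0$, hence subadditive, $\sum_{i,j}|v_i-v_j|^{2\gamma}\ge\bigl(\sum_{i,j}|v_i-v_j|^2\bigr)^\gamma=(2N^2\sigma_v^2)^\gamma$. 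Combining with $\frac{d}{dt}\sigma_v^2 = \frac1N\dot m_{2}$ yields $\frac{d}{dt}\sigma_v^2 \le -c_0\,\psi(2\sqrt N\sigma_x)\sigma_v^{2\gamma}$, i.e.\ $\dot\sigma_v \le -c\,\psi(2\sqrt N\sigma_x)\sigma_v^{2\gamma-1}$ for an explicit $c=c(\gamma,C_1,N)>0$ (dividing by $2\sigma_v$ on $\{\sigma_v>0\}$). This is where the nonlinearity genuinely enters, and the concavity/subadditivity lower bound is the main technical point to get right.

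The heart of the argument is the correct choice of Lyapunov quantity. I would multiply the velocity inequality by $(3-2\gamma)\sigma_v^{2-2\gamma}\ge0$, which collapses the surviving power to $1$: $\frac{d}{dt}\sigma_v^{3-2\gamma} \le -(3-2\gamma)c\,\psi(2\sqrt N\sigma_x)\sigma_v$. The exponent $3-2\gamma$ is dictated precisely by the requirement that this power of $\sigma_v$ match the spreading bound $\dot\sigma_x\le\sigma_v$. Since $\psi\ge0$ and $\dot\sigma_x\le\sigma_v$, the right-hand side is $\le -(3-2\gamma)c\,\psi(2\sqrt N\sigma_x)\dot\sigma_x = -(3-2\gamma)c\,\frac{d}{dt}\int_{\sigma_x(0)}^{\sigma_x(t)}\psi(2\sqrt N s)\,ds$, so that $\mathcal{E}(t):=\sigma_v^{3-2\gamma}(t)+(3-2\gamma)c\int_{\sigma_x(0)}^{\sigma_x(t)}\psi(2\sqrt N s)\,ds$ is nonincreasing.

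Finally I would extract the two flocking properties. Nonnegativity of $\sigma_v^{3-2\gamma}$ together with $\mathcal E(t)\le\mathcal E(0)=\sigma_v^{3-2\gamma}(0)$ gives $\int_{\sigma_x(0)}^{\sigma_x(t)}\psi(2\sqrt N s)\,ds \le \frac{\sigma_v^{3-2\gamma}(0)}{(3-2\gamma)c}$; choosing $C_2=c$ in hypothesis \eqref{flock-cond}, the right-hand side is $\le\int_{\sigma_x(0)}^\infty\psi(2\sqrt N s)\,ds$. Since $s\mapsto\int_{\sigma_x(0)}^s\psi(2\sqrt N u)\,du$ is continuous and strictly increasing, this confines $\sigma_x(t)$ below a finite $s^*$ for all $t$, giving $\sup_t\sigma_x<\infty$. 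On this bounded range $\psi(2\sqrt N\sigma_x)\ge\psi(2\sqrt N s^*)=:\psi_m>0$, so $\dot\sigma_v\le -c\psi_m\sigma_v^{2\gamma-1}$; because $2\gamma-1\in(0,1)$ for $\gamma\in(\frac12,\frac34)$, integrating $\frac{d}{dt}\sigma_v^{2-2\gamma}\le-(2-2\gamma)c\psi_m$ shows $\sigma_v$ vanishes in finite time, hence $\lim_{t\to\infty}\sigma_v=0$. The main obstacle is less any single computation than the coordinated bookkeeping of exponents: securing the subadditivity lower bound and then selecting the $3-2\gamma$ Lyapunov power so that dissipation and spreading balance. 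Care is also needed near $\sigma_v=0$, where the divisions by $\sigma_v$ must be justified by restricting to the open set $\{\sigma_v>0\}$ and invoking continuity.
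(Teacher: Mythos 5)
Your proposal is correct and follows essentially the same route as the paper's own sketch: you derive the dissipation inequality \eqref{dis-sigma} (your subadditivity bound $\sum_{i,j}|v_i-v_j|^{2\gamma}\geq\bigl(\sum_{i,j}|v_i-v_j|^2\bigr)^{\gamma}$, valid since $\gamma\leq 1$, is the standard way to handle the nonlinearity there), then show a Lyapunov functional of the form $\sigma_v^{3-2\gamma}+ (3-2\gamma)c\int^{\sigma_x(t)}\psi(2\sqrt{N}s)\,ds$ is nonincreasing and invoke \eqref{flock-cond} to confine $\sigma_x$. The only cosmetic differences are that you work with $\mathcal{E}^{+}$ alone rather than both $\mathcal{E}^{\pm}$ (which suffices for the stated conclusion) and that you supply more detail than the paper's sketch, including the finite-time vanishing of $\sigma_v$.
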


The proof of Proposition 1 is actually pretty straightforward and a
brief sketch of it is possible in few lines. First, one easily shows
that the inequality for the dissipation of $\sigma_{v}(t)$ is
\begin{equation} \label{dis-sigma}\frac{d}{dt}\sigma_{v}(t) \leq
-C_{2}\psi(2\sqrt{N}\sigma_{x}(t)) \sigma_{v}^{2\gamma -1}
(t).\end{equation} Inequality \eqref{dis-sigma} would be enough to
ensure the convergence $\sigma_{v}(t)\to 0$, as long as we had a
uniform bound on $\sigma_{x}(t)$ (some $\sigma_{x}^{\infty}<\infty$
such that $\sup \limits_{t>0} \sigma_{x}(t)\leq
\sigma_{x}^{\infty}$). For this, we may use the Lyapunov functionals
\begin{equation*} \mathcal{E}^{\pm}(t):=\frac{\sigma_{v}^{3-2\gamma}(t)}{3-2\gamma}
\pm C_{2}\int_{0}^{\sigma_{x}(t)}\psi(2\sqrt{N}s)\,
ds,\end{equation*} and show (with the help of \eqref{dis-sigma})
that $\mathcal{E}^{\pm}(t)$ are dissipative
($\mathcal{E}^{\pm}(t)\leq \mathcal{E}^{\pm}(0)$ for $t\geq 0$).
Finally, using condition \eqref{flock-cond} and the dissipation of
$\mathcal{E}^{\pm}(t)$ it can be shown that $\sup \limits_{t>0}
\sigma_{x}(t)\leq \sigma_{x}^{\infty}<\infty$ which concludes with
the proof.

\begin{remark} It might appear that the flocking condition
\eqref{flock-cond} in Proposition 1 is an unnecessary restriction
but it is consistent with the phase transition character in the
classical CS model \eqref{CS}. This condition is satisfied trivially
in the case of long range interactions with $\int^{\infty}\psi(s)\,
ds = \infty$, giving unconditional flocking when the communication
weight $\psi(\cdot)$ has a heavy tail. When the interaction between
agents has a short range, the emergence of a flock is only
conditionally possible.
\end{remark}

We note that in the statement of Proposition 1, flocking is proven
for smooth solutions to system \eqref{NL-CS}. The well-posedeness of
system \eqref{NL-CS} is of course a separate problem and everything
depends on the regularity of interaction $\psi(\cdot)$ and coupling
$\Gamma(\cdot)$. Naturally, local well-posedeness can be proven for
more singular interactions between agents and velocity couplings
(see e.g. \cite{CaChHa}). For global results, the Lipshitz property
for both $\Gamma(\cdot)$ and $\psi(\cdot)$ is necessary. The
Lipshitz condition on $\Gamma(\cdot)$ for $\gamma \in
[1,\frac{3}{2})$ is a natural one, since the prototype example is
$\Gamma(v)=v|v|^{2(\gamma -1)}$, and hence this assumption is made
in our main result. On the other hand, for $\gamma \in
(\frac{1}{2},1)$ we may have non-uniqueness even for regular
communication weights. Our result gives a definite answer to the
existence of smooth solutions for $\gamma \in [1,\frac{3}{2})$ when
$\psi(s)=s^{-\alpha}$, $\alpha \geq 1$. For $\gamma \in
(\frac{1}{2},1)$ uniqueness is possible depending on the choice of
initial data and this problem remains open.

We now give the main result that we prove in the next section.
\begin{theorem} Consider the CS system \eqref{NL-CS} with
$\gamma \in (\frac{1}{2},\frac{3}{2})$ and initial data
$(x_{0},v_{0})$ that satisfy \begin{equation*} x_{i0}\neq
x_{j0}\qquad \text{for} \quad i \neq j .\end{equation*} We consider
the communication weight $\psi(s)=s^{-\alpha}$, with $\alpha \geq
1$. Furthermore, if $\gamma \in [1,\frac{3}{2})$ we assume that
$\Gamma(\cdot)$ is Lipshitz continuous. Then for any solution of the
NL CS system the particle trajectories remain non-collisional for
$t>0$.
\end{theorem}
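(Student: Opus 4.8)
The plan is to argue by contradiction, assuming a first collision occurs at some finite time $t^{*}>0$, and to show that a suitably designed pairwise functional—built so that the singular self-interaction is exactly balanced against a singular potential—cannot remain finite up to $t^{*}$, which is absurd. First I would record the one a priori bound that is needed: by Lemma 1 the second moment $m_{2}(t)$ is nonincreasing, so $|v_{i}(t)|\le\sqrt{m_{2}(0)}$ for every $i$ and $t$; hence all relative velocities are bounded, $|v_{i}-v_{j}|\le V:=2\sqrt{m_{2}(0)}$, and, $\Gamma$ being continuous, $|\Gamma(v_{i}-v_{j})|\le\Gamma_{M}$ on this range. Writing $d(t):=\min_{i\ne j}|x_{i}(t)-x_{j}(t)|$ and using the relative variables $x_{ij}=x_{i}-x_{j}$, $v_{ij}=v_{i}-v_{j}$, $d_{ij}=|x_{ij}|$, I assume $d>0$ on $[0,t^{*})$ and $d(t^{*})=0$, and follow a pair $(i,j)$ realizing the minimum near $t^{*}$. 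Extracting the singular self-term with the help of (A1), the relative velocity obeys $\dot v_{ij}=-\tfrac{2}{N}\psi(d_{ij})\Gamma(v_{ij})+R_{ij}$, where $R_{ij}=\tfrac1N\sum_{k\ne i,j}\bigl[\psi(|x_{ik}|)\Gamma(v_{k}-v_{i})-\psi(|x_{jk}|)\Gamma(v_{k}-v_{j})\bigr]$ collects the forces of the remaining agents.

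The heart of the argument is the pairwise functional
\[
\Phi_{ij}(t):=\frac{|v_{ij}(t)|^{\,3-2\gamma}}{3-2\gamma}+\frac{2C_{1}}{N}\int_{d_{ij}(t)}^{r_{0}}\psi(s)\,ds,
\]
with a fixed reference length $r_{0}$; note $3-2\gamma>0$ since $\gamma<\tfrac32$, and that this is exactly the exponent appearing in the flocking functionals $\mathcal{E}^{\pm}$. Differentiating, using (A2) in the form $\langle v_{ij},\Gamma(v_{ij})\rangle\ge C_{1}|v_{ij}|^{2\gamma}$ for the velocity term and $\tfrac{d}{dt}d_{ij}=\langle\hat x_{ij},v_{ij}\rangle\ge-|v_{ij}|$ (with $\hat x_{ij}=x_{ij}/|x_{ij}|$) for the potential term, the two singular contributions of order $\psi(d_{ij})|v_{ij}|$ cancel, leaving
\[
\frac{d}{dt}\Phi_{ij}(t)\le|v_{ij}|^{\,2-2\gamma}\,|R_{ij}|.
\]
Because $\alpha\ge1$ forces $\int_{0^{+}}\psi(s)\,ds=+\infty$, the potential part of $\Phi_{ij}$ diverges as $d_{ij}\to0$; therefore, if the right-hand side is integrable on $[0,t^{*})$, then $\Phi_{ij}$ stays bounded, $d_{ij}$ remains bounded away from $0$, and the assumed collision is impossible.

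The main obstacle is thus the integrability of $|v_{ij}|^{2-2\gamma}|R_{ij}|$, that is, the control of the third-party forces. I would split each summand of $R_{ij}$ as
\[
\psi(|x_{ik}|)\bigl[\Gamma(v_{k}-v_{i})-\Gamma(v_{k}-v_{j})\bigr]+\bigl[\psi(|x_{ik}|)-\psi(|x_{jk}|)\bigr]\Gamma(v_{k}-v_{j}).
\]
The Lipschitz hypothesis on $\Gamma$ (imposed precisely for $\gamma\in[1,\tfrac32)$) bounds the first bracket by $L_{\Gamma}|v_{ij}|$, so its contribution to $\dot\Phi_{ij}$ is $\lesssim\psi(|x_{ik}|)|v_{ij}|^{\,3-2\gamma}$, which is harmless because $3-2\gamma>0$; the mean value theorem with $\bigl||x_{ik}|-|x_{jk}|\bigr|\le d_{ij}$ bounds the second bracket by $\alpha\,\xi_{k}^{-\alpha-1}d_{ij}\Gamma_{M}$, where $\xi_{k}$ lies between $|x_{ik}|$ and $|x_{jk}|$. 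When every other agent stays at distance $\ge\rho>0$ from the pair one has $\xi_{k}\ge\rho$ and $\psi(|x_{ik}|)\le\psi(\rho)$, the whole right-hand side is bounded on $[0,t^{*}]$, and the argument closes.

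The genuinely hard case—exactly the situation treated in \cite{CaChMuPe}—is the \emph{simultaneous} collision of three or more agents, in which $\xi_{k}$ itself degenerates to order $d_{ij}$ and the naive bound $\xi_{k}^{-\alpha-1}d_{ij}\sim\psi(d_{ij})$ is of the same singular order as the damping meant to defeat it. I expect this to be the crux: I would run the estimate on the whole colliding cluster at once and recover the missing room from the antisymmetry of $\psi(|x_{ik}|)-\psi(|x_{jk}|)$ under the exchange $i\leftrightarrow j$, summing the pairwise functionals over the cluster so that the dangerous weight-difference terms telescope. A secondary technical point, relevant only for $\gamma\in(1,\tfrac32)$, is the factor $|v_{ij}|^{\,2-2\gamma}$, which degenerates as $|v_{ij}|\to0$; this too is absorbed via the Lipschitz bound on $R_{ij}$, together with the observation that a vanishing relative velocity simultaneously suppresses the approach rate $\dot d_{ij}$ and so cannot sustain a collision. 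Assembling these pieces contradicts $d(t^{*})=0$ and proves that trajectories stay non-collisional for all $t>0$.
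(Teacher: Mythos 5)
Your overall strategy is the right one, and your functional $\Phi_{ij}$ is essentially the quantity the paper integrates in \eqref{PsiEst4} (obtained there by multiplying the inequality (C3) by $\|v\|_C^{2-2\gamma}$); the cancellation of the two singular $\psi\cdot|v|$ contributions is a correct and genuine observation. But the proposal leaves unresolved precisely the two points that constitute the actual proof. First, for a simultaneous collision of three or more agents your remainder $R_{ij}$ is itself singular: for a third agent $k$ inside the colliding cluster, the Lipschitz part $\psi(|x_{ik}|)\bigl[\Gamma(v_k-v_i)-\Gamma(v_k-v_j)\bigr]$ contributes a term of order $\psi(|x_{ik}|)\,|v_{ij}|^{3-2\gamma}$ to $\dot\Phi_{ij}$, and this is \emph{not} harmless as you claim, since $\psi(|x_{ik}|)\to\infty$ and nothing forces the factor $|v_{ij}|^{3-2\gamma}$ to compensate; the weight-difference part is equally singular (your $\xi_k$ is of order $d_{ij}$), and ``telescoping by antisymmetry'' over pairwise functionals is a hope, not an argument. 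The paper sidesteps this entirely by working from the outset with the cluster aggregates $\|x\|_C$, $\|v\|_C$ and symmetrizing over $i\leftrightarrow k$ \emph{inside} the cluster in the term $J_1$, so that every intra-cluster singular interaction is absorbed into the dissipative term $-2c_0\psi(\|x\|_C)\|v\|_C^{2\gamma}$, and only out-of-cluster interactions (bounded, since $|x_{ik}|\geq\delta$ there) survive as perturbations.

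Second, and independently, for $\gamma\in(1,\frac{3}{2})$ the inequality $\dot\Phi_{ij}\leq|v_{ij}|^{2-2\gamma}|R_{ij}|$ does not close: here $2-2\gamma<0$, the weight-difference part of $R_{ij}$ is not proportional to $|v_{ij}|$, so the right-hand side can fail to be integrable in time as $|v_{ij}|\to0$, and boundedness of $\Phi_{ij}$ does not follow. Your remark that a small relative velocity ``suppresses the approach rate'' is the correct intuition, but it has to be converted into an argument; that is exactly what the paper's case analysis (C1)--(C3) does, together with the decomposition of $(t_0,t_C)$ into the regions $I_1$ and $I_2$, the Lipschitz-in-time bound for $\Psi(\|x\|_C(\cdot))$ on $I_1$, and Lemma 2 to control the oscillation of $\|v\|_C^{3-2\gamma}$ across the possibly infinitely many switching times between the two regimes. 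Without a substitute for that bookkeeping the proof is incomplete for $\gamma>1$. (For $\gamma\in(\frac{1}{2},1]$ your argument, run on the cluster aggregates rather than on a single minimizing pair, does work and coincides with the paper's short first case.)
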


The following easy lemma will prove helpful.
\begin{lemma} Given $p>0$, then for any $q>0$ there exists a constant
$C_{pq}:=C(p,q)>0$ such that \begin{equation*}|a^{-p}-b^{-p}|\geq
C_{pq}|a^q -b^q| \quad \text{for} \quad 0<a,b<1 .\end{equation*}
\end{lemma}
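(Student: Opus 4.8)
The plan is to exploit the symmetry of the inequality and reduce the two-point estimate to a single monotonicity statement. By the symmetry $a \leftrightarrow b$ it suffices to treat $0 < a < b < 1$, the case $a = b$ being trivial. On this range $x \mapsto x^{-p}$ is strictly decreasing and $x \mapsto x^{q}$ strictly increasing, so $a^{-p} - b^{-p} > 0$ and $b^{q} - a^{q} > 0$, and both absolute values may be removed. The target inequality $a^{-p} - b^{-p} \geq C_{pq}(b^{q} - a^{q})$ then rearranges to $a^{-p} + C_{pq}\, a^{q} \geq b^{-p} + C_{pq}\, b^{q}$, i.e. to the assertion that the single-variable function $F(x) := x^{-p} + C_{pq}\, x^{q}$ is non-increasing on $(0,1)$.

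First I would compute $F'(x) = -p\, x^{-p-1} + C_{pq}\, q\, x^{q-1}$ and ask when $F'(x) \le 0$ throughout $(0,1)$. Multiplying by $x^{p+1} > 0$, this is equivalent to $C_{pq}\, q\, x^{p+q} \le p$. Since $p + q > 0$ and $0 < x < 1$ we have $x^{p+q} < 1$, so the left-hand side is bounded by $C_{pq}\, q$; choosing $C_{pq} := p/q$ makes it strictly less than $p$. Hence $F'(x) < 0$ on $(0,1)$, the function $F$ is strictly decreasing, and the desired inequality follows (in fact with a strict sign) with the explicit constant $C_{pq} = p/q$.

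I do not anticipate a genuine obstacle here; the only point that requires a little care is the regime $q < 1$. A first instinct is to apply the mean value theorem separately to $x^{-p}$ and to $x^{q}$, but the two intermediate points differ and $\eta^{\,q-1}$ blows up as $\eta \to 0$ when $q < 1$, so that route does not immediately produce a uniform constant. The monotonicity reformulation above sidesteps this entirely; equivalently one may invoke the Cauchy mean value theorem, which gives $\frac{a^{-p} - b^{-p}}{b^{q} - a^{q}} = \frac{p}{q}\, c^{-(p+q)}$ for a single $c \in (a,b) \subset (0,1)$, and $c^{-(p+q)} > 1$ recovers the same bound.
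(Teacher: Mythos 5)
Your proof is correct and rests on the same underlying idea as the paper's: after reducing by symmetry to $0<a<b<1$, both arguments amount to bounding the difference quotient $\frac{a^{-p}-b^{-p}}{b^{q}-a^{q}}$ below by a positive constant (the paper via the substitution $x=a^{q}$, $y=b^{q}$ and the claim that $f(x,y)=\frac{y^{-p/q}-x^{-p/q}}{x-y}$ is bounded below on the triangle, you via the equivalent monotonicity of $F(x)=x^{-p}+C_{pq}x^{q}$ or the Cauchy mean value theorem). Your version actually completes the step the paper only asserts, and supplies the explicit admissible constant $C_{pq}=p/q$.
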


\begin{proof} This is an exercise in calculus. For $a=b$ it holds trivially.
If $a \neq b$, we set $x=a^q$, $y=b^q$ and we consider the function
$f(x,y)=\frac{y^{-p/q}-x^{-p/q}}{x-y}$ on the triangle $0<y<x<1$. We
can show that the function has a positive lower bound $C_{pq}>0$ for
any pair $p,q >0$.
\end{proof}

\section{Collision-avoiding for singular interactions.}

\begin{proof} The idea of the proof follows closely the steps in
\cite{CaChMuPe} in its first part. We assume that at some finite
time $t_{C}>0$ the first collision between a group of particle
happens. Then, based on this assumption and estimates that we derive
for the dynamics of the group of particles that collide, we reach a
contradiction. We denote the group of particles that collide at time
$t_{C}$ with $C$, and their number by $|C|$ i.e.
\begin{align*} &|x_{i}(t)-x_{j}(t)| \to 0 \quad \text{as}\quad t \nearrow t_{C}
\quad \text{for} \quad (i,j) \in C^2:=C \times C
\\ & |x_{i}(t)-x_{j}(t)|\geq \delta >0 \quad \text{for some} \quad \delta
>0, \quad (i,j)\not \in C^2, \quad t \in [0,t_{C}].
\end{align*}

We define the position and velocity fluctuation for the particles in
the collisional group by
\begin{equation*} \|x\|_{C}(t):=\sqrt{\sum \limits_{(i,j)
\in C^2}|x_{i}(t)-x_{j}(t)|^2} \quad \text{and} \quad
\|v\|_{C}(t):=\sqrt{\sum \limits_{(i,j) \in
C^2}|v_{i}(t)-v_{j}(t)|^2} .\end{equation*} Here $\sum
\limits_{(i,j)\in C^2}$ is the sum over all pairs $(i,j)$ where both
indices are members of group $C$. According to the definition we
just gave, we have that $\|x\|_{C}(t)\to 0$ as $t \nearrow t_{C}$.
We also have the following uniform bounds for $\|x\|_{C}(t)$ and
$\|v\|_{C}(t)$ as a result of the particle dynamics. There exist
$M>0$ and $R=R(t_{C})>0$, such that for all $t \in [0,t_{C}]$ we
have \begin{equation*} \|v\|_{C}(t)\leq M := \sqrt{2}|C| \, \sup
\limits_{i}|v_{i0}|, \quad \|x\|_{C}(t)\leq R := \sqrt{2}|C| \,(\sup
\limits_{i}|x_{i0}| +\sup \limits_{i}|v_{i0}|t_{C}).\end{equation*}
It is easy to show using the definition of $\|x\|_{C}(t)$ that
\begin{equation} \label{EqMot} \left| \frac{d}{dt}
\|x\|_{C}(t)\right| \leq \|v\|_{C}(t) . \end{equation} Our plan is
to show a sharp inequality for the dissipation of $\|v\|_{C}(t)$ in
the spirit of \cite{CaChMuPe}. In more detail, we show that:
\begin{itemize}
\item If
$\frac{1}{2}<\gamma <1$,
\begin{equation} \label{In1} \frac{d}{dt}\|v\|^{2}_{C}(t)
\leq -2c_{0}\psi(\|x\|_{C}(t))\|v\|_{C}^{2 \gamma}(t)
+2c_{1}\|x\|_{C}(t)\|v\|_{C}(t) +2c_{2}\|v\|_{C}(t) .\end{equation}
\item If $1\leq \gamma <\frac{3}{2}$,
\begin{equation} \label{In2} \frac{d}{dt}\|v\|^{2}_{C}(t)
\leq -2c_{0}\psi(\|x\|_{C}(t))\|v\|_{C}^{2 \gamma}(t)
+2c_{1}\|x\|_{C}(t)\|v\|_{C}(t) +2c_{2}\|v\|^{2}_{C}(t)
.\end{equation}
\end{itemize}
For the derivation of \eqref{In1}-\eqref{In2} we compute the time
evolution of $\|v\|_{C}(t)$, i.e.
\begin{align*} &\frac{d}{dt}\|v\|_{C}^2 =2\sum \limits_{(i,j)\in C^2} \left \langle
v_{i}-v_{j},\frac{1}{N}\sum
\limits_{k}\psi(|x_{k}-x_{i}|)\Gamma(v_{k}-v_{i})-\frac{1}{N} \sum
\limits_{k}\psi(|x_{k}-x_{j}|)\Gamma(v_{k}-v_{j})\right \rangle \\
\\ &=\frac{2}{N}\left( \sum_{\substack{(i,j)\in C^2 \\ k \in C}} +\sum_{\substack{(i,j)\in
C^2 \\ k \not \in C}} \right) \left( \psi(|x_{k}-x_{i}|)\left
\langle v_{i}-v_{j},\Gamma(v_{k}-v_{i}) \right \rangle  -
\psi(|x_{k}-x_{j}|)\left \langle
v_{i}-v_{j},\Gamma(v_{k}-v_{j})\right \rangle \right)
\\&=:J_{1}+J_{2} .
\end{align*}
The computation for the first term $J_{1}$ gives
\begin{align*} J_{1}&=\frac{2}{N} \sum_{\substack{(i,j)\in C^2 \\ k \in C}}
\left( \psi(|x_{k}-x_{i}|)\langle v_{i}-v_{j}, \Gamma(v_{k}-v_{i})
\rangle - \psi(|x_{k}-x_{j}|)\langle v_{i}-v_{j},
\Gamma(v_{k}-v_{j})\rangle \right)
\\ &\stackrel{i \leftrightarrow j}{=}\frac{4}{N} \sum_{\substack{(i,j)\in C^2 \\ k \in C}}
\psi(|x_{k}-x_{i}|)\langle v_{i}-v_{j}, \Gamma(v_{k}-v_{i}) \rangle
\\ &\stackrel{i \leftrightarrow k}{=}\frac{2}{N}\sum_{\substack{(i,j)\in C^2 \\ k \in C}}
\psi(|x_{k}-x_{i}|)\langle v_{i}-v_{j}, \Gamma(v_{k}-v_{i})\rangle +
\frac{2}{N}\sum_{\substack{(i,j)\in C^2 \\ k \in C}}
\psi(|x_{k}-x_{i}|)\langle v_{k}-v_{j}, \Gamma(v_{i}-v_{k})\rangle
\\ &=\frac{2}{N}\sum_{\substack{(i,j)\in C^2 \\ k \in C}}
\psi(|x_{k}-x_{i}|)\langle v_{i}-v_{k}, \Gamma(v_{k}-v_{i})\rangle
=-\frac{2}{N}\sum_{\substack{(i,j)\in C^2 \\ k \in C}}
\psi(|x_{k}-x_{i}|)\langle v_{k}-v_{i}, \Gamma(v_{k}-v_{i})\rangle
\\ &\stackrel{(A2)}{\leq}-\frac{2C_{1}|C|}{N} \sum \limits_{(i,j)\in C^2}
\psi(|x_{i}-x_{j}|)|v_{i}-v_{j}|^{2 \gamma} .
\end{align*}
Then, using the definition of $\|x\|_{C}$, $\|v\|_{C}$ and the
monotonicity of $\psi(\cdot)$
\begin{equation*} J_{1}\leq -2c_{0}\psi (\|x\|_{C})\|v\|_{C}^{2 \gamma}
\qquad \text{for}\quad c_{0}=\frac{C_{1}|C|}{N}.\end{equation*} For
$J_{2}$ we have
\begin{align*} J_{2}&=\frac{2}{N} \sum_{\substack{(i,j)\in C^2\\ k \not \in C}}
\psi(|x_{k}-x_{i}|)\langle v_{i}-v_{j}, \Gamma(v_{k}-v_{i}) \rangle
-\frac{2}{N}\sum_{\substack{(i,j)\in C^2 \\ k \not \in C}}
\psi(|x_{k}-x_{j}|)\langle v_{i}-v_{j}, \Gamma(v_{k}-v_{j}) \rangle
\\ &=\frac{2}{N}\sum_{\substack{(i,j)\in C^2 \\ k \not \in C}}
(\psi(|x_{k}-x_{i}|)-\psi(|x_{k}-x_{j}|))\langle v_{i}-v_{j},
\Gamma(v_{k}-v_{j}) \rangle
\\ &+\frac{2}{N}\sum_{\substack{(i,j)\in C^2
\\ k \not \in C}} \psi(|x_{k}-x_{i}|)\langle v_{i}-v_{j},
\Gamma(v_{k}-v_{i})-\Gamma(v_{k}-v_{j}) \rangle :=J_{21}+J_{22}
.\end{align*} The first term $J_{21}$ is bounded by
\begin{equation*} J_{21} \leq \frac{2}{N}
\Gamma_{M} L_{\delta}\sum_{\substack{(i,j)\in C^2
\\ k \not \in C}} |x_{i}-x_{j}|
|v_{i}-v_{j}|\leq 2 c_{1}\|x\|_{C}\|v \|_{C}, \quad
c_{1}=\frac{N-|C|}{N}\Gamma_{M}L_{\delta},\end{equation*} where
$L_{\delta}$ is the Lipshitz constant of $\psi(\cdot)$ on the
interval $(\delta, \infty)$ and $\Gamma_{M}:=\max \limits_{v}
|\Gamma(v)|$. Similarly, since $|x_{k}-x_{i}|>\delta$, it follows
that $\psi(|x_{k}-x_{i}|)<\psi(\delta)$ and we have the following
bounds for the second term $J_{22}$: \\ If $\frac{1}{2} <\gamma <1$,
\begin{equation*} J_{22} \leq \frac{4}{N} \psi(\delta) \Gamma_{M}\sum_{\substack{(i,j)\in C^2
\\ k \not \in C}} |v_{i}-v_{j}|\leq 2 c_{2}\|v\|_{C},\quad
c_{2}=\frac{2|C|(N-|C|)}{N}\psi(\delta)\Gamma_{M}, \end{equation*}
and if $1 \leq \gamma <\frac{3}{2}$ (using the Lipshitz property of
$\Gamma(\cdot)$, $|\Gamma(v)-\Gamma(w)|\leq L_{\Gamma}|v-w|$)
\begin{equation*} J_{22}\leq \frac{2}{N}\psi(\delta)
L_{\Gamma}\sum_{\substack{(i,j)\in C^2 \\ k \not \in C}}
|v_{i}-v_{j}|^2 \leq  2c_{2}\|v\|^{2}_{C}, \quad
c_{2}=\frac{N-|C|}{N}\psi(\delta) L_{\Gamma}. \end{equation*} The
derivation of estimates \eqref{In1}-\eqref{In2} is complete. We
mention a couple of differences with the linear case $\Gamma(v)=v$.
First, the term $J_{1}$ introduces the nonlinearity which makes it
impossible to use a differential Gronwall lemma given the additional
terms. Also, in contrast to the linear case where $J_{22}\leq 0$,
here $J_{22}$ is an extra term to be handled.

We keep in mind that for the singular weights $\psi(s)=s^{-\alpha}$
we are considering with $\alpha \geq 1$, their primitive
$\Psi(s)=\int^{s}\psi(t)dt$ is also singular at $0$. The following
bound on the increase of $\Psi(\|x\|_{C}(\cdot))$ on the interval
$(s,t)$ is useful.
\begin{align} \nonumber |\Psi(\|x \|_{C}(t))| &= \Big| \int_{s}^{t}
\frac{d}{d\tau}\Psi(\| x\|_{C}(\tau))\, d\tau + \Psi(\|x
\|_{C}(s))\Big|
\\ \nonumber &= \Big|\int_{s}^{t}\Psi'(\|x \|_{C}(\tau))
\frac{d}{d\tau}\|x \|_{C}(\tau)\, d\tau + \Psi(\|x \|_{C}(s))\Big|
\\ \label{PsiEst1} & \leq \int_{s}^{t}\psi(\|x \|_{C}(\tau))\|v \|_{C}(\tau)\,
d\tau + |\Psi(\|x \|_{C}(s))|. \end{align} If we can show that
$\Psi(\|x\|_{C}(t_{C}))<\infty$, the singularity of $\Psi(\cdot)$ at
$0$ implies that $\|x\|_{C}(t_{C})\neq 0$ which is a contradiction
to our initial hypothesis. In our study, we consider the cases
$\gamma \in (\frac{1}{2},1]$ and $\gamma \in (1,\frac{3}{2})$
separately. The first is rather trivial, but the latter requires a
bit of analysis.

$\bullet$ Case $\gamma \in (\frac{1}{2},1]$ : \\ The case of
$\frac{1}{2}<\gamma \leq 1$ is pretty straightforward. From estimate
\eqref{In1} we get directly that
\begin{equation*}\int_{s}^{t_{C}}
\psi(\|x\|_{C}(\tau))\|v\|^{2\gamma -1}_{C}(\tau)\, d\tau < \infty .
\end{equation*} We have that $0 \leq 2\gamma -1 \leq 1$ which,
combined with fact that $\|v\|_{C}(t)\leq M$, yields
\begin{equation*} \int_{s}^{t_{C}}
\psi(\|x\|_{C}(\tau))\|v\|_{C}(\tau)\, d\tau \leq
M^{2-2\gamma}\int_{s}^{t_{C}} \psi(\|x\|_{C}(\tau))\|v\|^{2\gamma
-1}_{C}(\tau)\, d\tau < \infty .
\end{equation*} In view of \eqref{PsiEst1} we have
$\Psi(\|x\|_{C}(t_{C}))<\infty$.

$\bullet$ Case $\gamma \in (1,\frac{3}{2})$ :
\\This case is more
elaborate. We know that $\|v\|_{C}(t)$ can only vanish at $t_{C}$,
otherwise because of \eqref{In2} it would be $0$ on some interval
$(s,t_{C})$ and $t_{C}$ cannot be the time of the first collision.
Thus, we have
\begin{equation}\label{In3} \frac{d}{dt}\|v\|_{C}(t)
\leq -c_{0}\psi(\|x\|_{C})\|v\|_{C}^{2 \gamma -1}(t)
+c_{1}\|x\|_{C}(t) +c_{2}\|v\|_{C}(t) .\end{equation} Although there
is no Gronwall lemma we can use for \eqref{In3}, we can reach a
contradiction doing a bit of qualitative analysis in \eqref{In3}.
The idea is actually pretty simple: For the three terms that appear
in the rhs of \eqref{In3} we study what happens when each of them is
the dominant as $t \nearrow t_{C}$ . For this, we consider the
following three cases
\begin{equation*}(C1) \quad \psi(\|x\|_{C}(t))\|v\|_{C}^{2 \gamma
-1}(t) <\|x\|_{C}(t),  \qquad (C2) \quad
\psi(\|x\|_{C}(t))\|v\|_{C}^{2 \gamma -1}(t)< \|v\|_{C}(t)
\end{equation*} and
\begin{equation*} (C3)\quad \frac{d}{dt}\|v\|_{C}(t)\leq
-\psi(\|x\|_{C}(t))\|v\|_{C}^{2\gamma -1}(t).
\end{equation*}
Notice that for now we make the assumption that constants
$c_{0}=c_{1}=c_{2}=1$. Later on we keep close track of all the
constants involved.

We begin by checking what happens when each of $(C1)$-$(C3)$ holds
on some interval $(t_{0},t_{C})$. When $(C1)$ holds on an interval
$(t_{0},t_{C})$, we show that $\|v\|_{C}$ is practically so small
that a collision cannot happen in finite time. Indeed, we have
\begin{equation*} \|v\|_{C}(t) < \left(
\frac{\|x\|_{C}(t)}{\psi(\|x\|_{C}(t))} \right)^{\frac{1}{2\gamma
-1}}=\|x\|_{C}^{\frac{\alpha +1}{2\gamma -1}}(t) \qquad \text{for}
\quad t \geq t_{0}.\end{equation*} By \eqref{EqMot}, we have
$\frac{d}{dt}\|x\|_{C}(t) > -\|v\|_{C}(t)\rightsquigarrow
\frac{d}{dt}\|x\|_{C}(t)>-\|x\|_{C}^{\frac{\alpha +1}{2\gamma
-1}}(t)$. We solve this differential inequality by integrating from
$t_{0}$ to $t$ to get
\begin{equation}\label{Sol1} \|x\|_{C}(t)
> \left(\|x\|_{C}^{\lambda}(t_{0})-\lambda
(t-t_{0})\right)^{\frac{1}{\lambda}},\end{equation} where $\lambda
=1-\frac{\alpha +1}{2\gamma -1}=\frac{2(\gamma -1)-\alpha}{2\gamma
-1}< 0$. Now, setting $t=t_{C}$ in \eqref{Sol1} leads to an obvious
contradiction since $\|x\|_{C}(t)>0$ for all $t\geq t_{0}$. It is
useful for our proof to compute the change of
$\Psi(\|x\|_{C}(\cdot))$ on the interval $(t_{0},t_{C})$.
\begin{align}\label{PsiEst2}|\Psi(\|x\|_{C}(t_{C}))|-|\Psi(\|x\|_{C}(t_{0}))|
\leq \int_{t_{0}}^{t_{C}}\psi(\|x\|_{C}(\tau)) \|v\|_{C}(\tau)\,
d\tau < \int_{t_{0}}^{t_{C}}\|x\|_{C}^{\mu}(\tau)\, d\tau
,\end{align} where $\mu=-\alpha +\frac{\alpha +1}{2\gamma
-1}=\frac{-2\alpha (\gamma -1)+1}{2\gamma -1}$. If $\mu \geq 0$, it
follows trivially that
$|\Psi(\|x\|_{C}(t_{C}))|-|\Psi(\|x\|_{C}(t_{0}))| \leq
R^{\mu}(t_{C}-t_{0})$. If on the other hand $\mu <0$, then we have
using \eqref{Sol1} in \eqref{PsiEst2}
\begin{align} \nonumber |\Psi(\|x\|_{C}(t_{C}))|-|\Psi(\|x\|_{C}(t_{0}))|
&< \int_{t_{0}}^{t_{C}} (\|x\|_{C}^{\lambda}(t_{0})-\lambda
(\tau-t_{0}))^{\nu} \, d\tau \\\nonumber & =\frac{1}{-\lambda (\nu
+1)}\left( \|x\|_{C}^{\lambda}(t_{0})-\lambda
(\tau-t_{0})\right)^{\nu +1}|_{t_{0}}^{t_{C}}
\\ \label{PsiEst3}&=-\frac{1}{\lambda (\nu +1)}
\left( (\|x\|_{C}^{\lambda}(t_{0})-\lambda (t_{C}-t_{0}))^{\nu
+1}-\|x\|_{C}^{\lambda(\nu +1)}(t_{0})\right) < \infty ,
\end{align} where $\nu=\frac{\mu}{\lambda}=\frac{-2\alpha (\gamma
-1)+1}{2(\gamma -1)-\alpha}>0$. The last inequality implies that
$|\Psi(\|x\|_{C}(t_{C}))|-|\Psi(\|x\|_{C}(t_{0}))| \leq
\frac{C_{\nu}}{\nu +1} (t_{C}-t_{0})$, for some constant
$C_{\nu}:=C(\nu)>0$, due to the basic inequality $|a^p-b^p| \leq
C_{p}|a-b|$, for $0<a,b<1$, $p\geq 1$.

Similarly, if $(C2)$ holds on some interval $(t_{0},t_{C})$ we have
\begin{equation*}  \|v\|_{C}(t)<\left(
\frac{1}{\psi(\|x\|_{C}(t))}\right)^{\frac{1}{2\gamma
-2}}=\|x\|_{C}^{\frac{\alpha}{2\gamma -2}}(t) \qquad \text{for}
\quad t \geq t_{0} .\end{equation*} By \eqref{EqMot}, we have
$\frac{d}{dt}\|x\|_{C}(t)>-\|x\|_{C}^{\frac{\alpha}{2(\gamma
-2)}}(t)$. The solution is once again given by \eqref{Sol1}, only
this time $\lambda=\frac{2(\gamma -1)-\alpha}{2(\gamma -1)}<0$.
Setting $t=t_{C}$ in \eqref{Sol1} we get a contradiction like
before. Also, the change of $\Psi(\|x\|_{C}(\cdot))$ over
$(t_{0},t_{C})$ is bounded like in \eqref{PsiEst2}, with
$\mu=-\alpha +\frac{\alpha}{2\gamma -2}$. Now
 $\mu>0$ and we get the estimate $|\Psi(\|x\|_{C}(t_{C}))|-|\Psi(\|x\|_{C}(t_{0}))| \leq
 R^{\mu}(t_{C}-t_{0})$. Overall, we have shown that $|\Psi(\|x\|_{C}(\cdot))|$
is Lipshitz in time when (C1) or (C2) holds, with a constant
$C_{\mu}=R^{\mu}$ for $\mu \geq 0$, or $C_{\mu}=\frac{C_{\nu}}{\nu
+1}$ for $\mu <0$.

Finally, we check what happens if $(C3)$ holds on interval
$(t_{0},t_{C})$. In this case, we show that although $\|v\|_{C}(t)$
is not necessarily ``small'', it dissipates so quickly to $0$ that
the collision cannot happen in finite time. Using a Grownall
inequality for $(C3)$ we have
\begin{equation} \label{Sol2}\|v\|_{C}(t) \leq \left( \|v\|^{2-2\gamma}_{C}(t_{0})
+2(\gamma -1)\int_{t_{0}}^{t}\psi(\|x\|_{C}(\tau))\, d\tau
\right)^{1/(2-2\gamma)} \qquad \text{for} \quad t \geq t_{0}.
\end{equation}
Then substituting eq. \eqref{Sol2} in \eqref{PsiEst1} we have
\begin{align*}  |\Psi(\|x \|_{C}(t_{C}))| &\leq  \int_{t_{0}}^{t_{C}}\psi(\|x \|_{C}(t)
\left( \|v\|^{2-2\gamma}_{C}(t_{0}) +2(\gamma
-1)\int_{t_{0}}^{t}\psi(\|x\|_{C}(\tau))\, d\tau
\right)^{\frac{1}{(2-2\gamma)}} dt +|\Psi(\|x \|_{C}(t_{0}))|
\\ &= -\frac{1}{3-2\gamma}\left( \|v\|^{2-2\gamma}_{C}(t_{0})
+2(\gamma -1)\int_{t_{0}}^{t}\psi(\|x\|_{C}(\tau))\, d\tau
\right)^{\frac{3-2\gamma}{2-2\gamma}} \Big|_{t=t_{0}}^{t=t_{C}}
+|\Psi(\|x\|_{C}(t_{0}))|
\\ &\leq  \frac{1}{3-2\gamma}\|v\|^{3-2\gamma}_{C}(t_{0})
+|\Psi(\|x\|_{C}(t_{0}))|< \infty .\end{align*} We may get a similar
estimate for (C3) by integrating the inequality
$\frac{1}{(3-2\gamma)}\frac{d}{dt}\|v\|^{3-2\gamma}_{C}\leq
-\psi(\|x\|_{C})\|v\|_{C}$ (which we get if we multiply (C3) by
$\|v\|^{2-2\gamma}_{C}$). Hence,
\begin{equation} \label{PsiEst4}|\Psi(\|x\|_{C}(t))|-|\Psi(\|x\|_{C}(s))| \leq -
\frac{1}{(3-2\gamma)}
(\|v\|^{3-2\gamma}_{C}(t)-\|v\|^{3-2\gamma}_{C}(s)).\end{equation}

We now proceed to the last part of the proof. We have investigated
what happens when there is an interval $(t_{0},t_{C})$ over which
one of the terms in the rhs of \eqref{In3} is dominant over the
others. Of course, this is by no means the only possible scenario.
In reality, we have to exclude the possibility of infinite
``oscillations'' between cases (C1)-(C3) right before the collision.
Therefore, we divide the interval $(t_{0},t_{C})$ into two regions
depending on the dominant terms in \eqref{In3}, i.e.
\begin{align}
\label{I1} & \frac{1}{2}c_{0}\psi(\|x\|_{C}(t))\|v\|_{C}^{2\gamma
-1}(t)< c_{1}\|x\|_{C}(t), \quad t \in I_{1}=(t_{0},t_{1})\cup
\ldots \cup(t_{2n},t_{2n+1})\cup \ldots
\\ &
\label{I2} \frac{d}{dt}\|v\|_{C}(t)<-\frac{1}{2}c_{0}
\psi(\|x\|_{C}(t))\|v\|_{C}^{2\gamma-1}(t)+c_{2}\|v\|_{C}(t), \quad
t \in I_{2}=(t_{1},t_{2})\cup \ldots \cup(t_{2n+1},t_{2n+2})\cup
\ldots
\end{align}

We start with region $I_{1}$ which is practically case (C1) that we
studied earlier. When $t \in I_{1}$, we have $\|v\|_{C}(t) < c_{3}
\|x\|^{\frac{\alpha +1}{2\gamma -1}}_{C}(t)$,
$c_{3}=\left(\frac{2c_{1}}{c_{0}}\right)^{1/(2\gamma -1)}$. We have
already proved the Lipshitz property of $\Psi(\|x\|_{C}(\cdot))$ on
any interval in $I_{1}$. Region $I_{2}$ is a ``hybrid'' of cases
(C2) and (C3), and we want to know how $\Psi(\|x\|_{C}(\cdot))$
changes on some interval in $I_{2}$ . First we multiply eq.
\eqref{I2} by $\|v\|^{2-2\gamma}_{C}(t)$ to get
\begin{align} \label{In4} \frac{1}{3-2\gamma}\frac{d}{dt}\|v\|^{3-2\gamma}_{C}(t)<
-\frac{1}{2}c_{0}\psi(\|x\|_{C}(t))\|v\|_{C}(t)+c_{2}\|v\|^{3-2\gamma}_{C}(t).\end{align}
Now using the definition of $\Psi(\cdot)$ and integrating on the
interval $(t_{2k+1},t_{2k+2})$ we get an expression which is the
equivalent to \eqref{PsiEst4} for ineq. \eqref{I2}
\begin{align} \label{PsiEst5}|\Psi(\|x\|_{C}(t_{2k+2}))|-|\Psi(\|x\|_{C}(t_{2k+1}))| &\leq -
\frac{2}{(3-2\gamma)c_{0}}
(\|v\|^{3-2\gamma}_{C}(t_{2k+2})-\|v\|^{3-2\gamma}_{C}(t_{2k+1}))
\\ \nonumber &+\frac{2c_{2}}{c_{0}}\int_{t_{2k+1}}^{t_{2k+2}}\|v\|^{3-2\gamma}_{C}(\tau)\,
d\tau .
\end{align}

The second rhs term in \eqref{PsiEst5} is bounded by
$c_{4}(t_{2k+2}-t_{2k+1})$, with
$c_{4}=\frac{2c_{2}}{c_{0}}M^{3-2\gamma}$. Taking the sum of the
second term from $k=1$ to $n$ we have $\sum \limits_{k=0}^{n}
\frac{2c_{2}}{c_{0}}\int_{t_{2k+1}}^{t_{2k+2}}\|v\|^{3-2\gamma}_{C}(\tau)\,
d\tau \leq c_{4} m(I_{2})$, where $m(I_{2})$ is the Lebesgue measure
of $I_{2}$. For the first term, we have from ineq. \eqref{In4} after
we integrate from $t_{2k+1}$ to $t_{2k+2}$ that
$\|v\|^{3-2\gamma}_{C}(t_{2k+2})-\|v\|^{3-2\gamma}_{C}(t_{2k+1})\leq
(3-2\gamma)c_{2}M^{3-2\gamma}(t_{2k+2}-t_{2k+1})$ and
\begin{equation*} \sum
\limits_{k=0}^{n}(\|v\|^{3-2\gamma}_{C}(t_{2k+2})-\|v\|^{3-2\gamma}_{C}(t_{2k+1}))\leq
(3-2\gamma)c_{2}M^{3-2\gamma} m(I_{2}) , \quad \forall n\geq 0.
\end{equation*} Of course, this bound is not enough since we need a
bound of this term from below!

The trick here lies in the fact that $\|v\|_{C}(t)$ does not change
drastically on $I_{1}$. We have shown that
$|\|x\|^{\lambda}_{C}(t_{2k+1})-\|x\|^{\lambda}_{C}(t_{2k})|\leq
-\lambda c_{3}(t_{2k+1}-t_{2k})$, where $\lambda=1-\frac{\alpha
+1}{2\gamma -1}<0$. Using the fact that
$\|v\|_{C}(t_{k})=c_{3}\|x\|^{\frac{\alpha +1}{2\gamma
-1}}_{C}(t_{k})$ and Lemma 2 (with $p=\frac{-\lambda (2\gamma
-1)}{\alpha +1}$, and $q=3-2\gamma$), we get
\begin{equation*}|\|v\|^{3-2\gamma}_{C}(t_{2k+1})-\|v\|^{3-2\gamma}_{C}(t_{2k})|<
c_{5}(t_{2k+1}-t_{2k}),\qquad \text{for} \quad c_{5}=\frac{-\lambda
c_{3}^{\frac{\lambda (2\gamma -1)}{\alpha +1}+1}}{C_{pq}}.
\end{equation*}  Hence, taking the sum we get
\begin{equation}\label{In5}\sum
\limits_{k=0}^{n}|\|v\|^{3-2\gamma}_{C}(t_{2k+1})-\|v\|^{3-2\gamma}_{C}(t_{2k})|<
c_{5} m(I_{1}), \quad \forall n\geq 0 .\end{equation} Finally, using
\eqref{In5} we have
\begin{equation*} -\sum \limits_{k=0}^{n}(\|v\|^{3-2\gamma}_{C}
(t_{2k+2})-\|v\|^{3-2\gamma}_{C}(t_{2k+1}))\leq c_{5}
m(I_{1})+\|v\|^{3-2\gamma}_{C}(t_{0})-\|v\|^{3-2\gamma}_{C}(t_{2n+2}),
\quad \forall n\geq 0 .\end{equation*}

We now decompose $|\Psi(\|x\|_{C}(t_{n}))|$ in the following manner
\begin{align*} |\Psi(\|x\|_{C}(t_{2n+2}))|&=\sum \limits_{k=0}^{n} \left(
|\Psi(\|x\|_{C}(t_{2k+2}))| -|\Psi(\|x\|_{C}(t_{2k+1}))| \right)
\\&+ \sum \limits_{k=0}^{n} \left( |\Psi(\|x\|_{C}(t_{2k+1}))| -
|\Psi(\|x\|_{C}(t_{2k}))| \right)+
|\Psi(\|x\|_{C}(t_{0}))|.\end{align*} We have done all the work
required to bound the two sums and show that
$|\Psi(\|x\|_{C}(t_{2n+2}))|<\infty$. We mention that a
decomposition could also be performed for
$|\Psi(\|x\|_{C}(t_{2n+1}))|$ with terms that are treated in similar
manner. We have shown how we can bound the first term of this
decomposition when we treated the terms that appear in
\eqref{PsiEst5}. Indeed,
\begin{align*} \sum \limits_{k=0}^{n} \left( |\Psi(\|x\|_{C}(t_{2k+2}))| -
|\Psi(\|x\|_{C}(t_{2k+1}))| \right) \leq \frac{2 c_{5}
m(I_{1})+2\|v\|^{3-2\gamma}_{C}(t_{0})-2\|v\|^{3-2\gamma}_{C}(t_{2n+2})}{(3-2\gamma)c_{0}}
+c_{4} m(I_{2}).\end{align*} The second term of the decomposition
can be easily bounded due to the Lipshitz property that we showed
for (C1), so
\begin{align*} \sum \limits_{k=0}^{n} \left( |\Psi(\|x\|_{C}(t_{2k+1}))| -
|\Psi(\|x\|_{C}(t_{2k}))| \right) \leq c_{3} C_{\mu}\sum
\limits_{k=0}^{n}(t_{2k+1}-t_{2k}) \leq c_{3} C_{\mu}
m(I_{1})<\infty \qquad \forall n\geq 0.\end{align*}

Putting those two sums together, we have
\begin{align*} |\Psi(\|x\|_{C}(t_{C}))|\leq \limsup
|\Psi(\|x\|_{C}(t_{n}))|&< c_{3}C_{\mu} m(I_{1})+c_{4}m(I_{2}) \\&+
\frac{2 c_{5}
m(I_{1})+2\|v\|^{3-2\gamma}_{C}(t_{0})}{(3-2\gamma)c_{0}}
+|\Psi(\|x\|_{C}(t_{0}))| <\infty ,\end{align*} which contradicts
our hypothesis of a collision at time $t_{C}$.

\end{proof}

\section{Uniform estimates on the particle distance for the
communication weight $\psi_{\delta}(s)=(s-\delta)^{-\alpha}$, with $
\alpha \geq 2 \gamma$.}

In this section, we give estimates for the minimum inter-particle
distance in the case of weights of the type
$\psi_{\delta}(s)=(s-\delta)^{-\alpha}$ for some fixed $\delta \geq
0$. We introduce the distance function $\mathcal{L}^{\beta}(t)$ for
the particle system $(x_{i}(t),v_{i}(t))$, with
$|x_{i}-x_{j}|>\delta$ for $1 \leq i \neq j \leq N$.
\begin{align*} \mathcal{L}^{\beta}(t):=\frac{1}{N(N-1)}\sum
\limits_{i\neq j}(|x_{i}(t)-x_{j}(t)|-\delta)^{-\beta}\qquad
\text{with} \quad \beta >0 .\end{align*} The symbol $\sum \limits_{i
\neq j}$ is short for the sum over all pairs $i,j$ for which $i \neq
j$. For the special case $\beta=0$ we define
$\mathcal{L}^{0}(t):=\frac{1}{N(N-1)}\sum \limits_{i\neq j}\log
(|x_{i}(t)-x_{j}(t)|-\delta)$.

This function is chosen so that if $\mathcal{L}^{\beta}(t)<\infty$
on some interval $[0,T]$, then it follows that particles do not
collide and that $|x_{i}(t)-x_{j}(t)|>\delta $ for $i \neq j$ on
$[0,T]$, provided that $|x_{i0}-x_{j0}|>\delta $ for $i\neq j$. We
will in fact show that if $\mathcal{L}^{\beta}(0)<\infty$ and given
any $T>0$, we have that $\mathcal{L}^{\beta}(t)< O(T)$ for all $t
\in [0,T]$. Of course, the choice of $\beta$-distance we use depends
directly on $\alpha$ and $\gamma$. In the spirit of \cite{CaChMuPe},
we introduce the maximal collisionless life-span of a solution with
initial datum $x_{0}$, i.e.
\begin{equation*} T_{0}:=\sup \{s \geq 0:  \forall \, \text{solution}\, \,
(x(t),v(t))\, \text{to problem \eqref{NL-CS}, there are no
collisions on}\, \, [0,s) \}.\end{equation*} We then prove
\begin{theorem} Suppose that $\alpha \geq 2 \gamma$ and that the CS
system has initial data $(x_{i0},v_{i0})$ satisfying
$|x_{i0}-x_{j0}|>\delta$ for $1 \leq i \neq j \leq N$. Then, for any
global smooth solution $(x(t),v(t))$ to the NL CS particle system we
have $T_{0}=\infty$. Moreover, we have the following estimates for
$t \in [0,T_{0})$:
\\(i) For $\alpha =2 \gamma$  we have
\begin{align} \label{Es1} \mathcal{L}^{0}(t)+
\frac{1}{2C_{1} \gamma (N-1)}\sum \limits_{i}|v_{i}(t)|^2 \leq
\frac{2 \gamma -1}{2 \gamma}t+ \mathcal{L}^{0}(0)+ \frac{1}{2 C_{1}
\gamma (N-1)}\sum \limits_{i}|v_{i0}|^2 .
\end{align}
\\(ii) For $\alpha >2 \gamma$ we choose $\beta=\frac{\alpha}{2\gamma}
-1$ and have
\begin{align}\label{Es2}\mathcal{L}^{\beta}(t)+
\frac{\beta}{2C_{1} \gamma (N-1)}\sum \limits_{i}|v_{i}(t)|^2 \leq
\frac{(2 \gamma -1)\beta}{2 \gamma}t +\mathcal{L}^{\beta}(0)+
\frac{\beta}{2C_{1} \gamma (N-1)}\sum \limits_{i}|v_{i0}|^2 .
\end{align}
\end{theorem}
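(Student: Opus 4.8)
The plan is to convert the collisionless-lifespan question into a single scalar differential inequality for $\mathcal{L}^{\beta}$ coupled to the second-moment dissipation of Lemma 1, and then integrate. Writing $s_{ij}(t):=|x_{i}(t)-x_{j}(t)|-\delta$ and $w_{ij}(t):=|v_{i}(t)-v_{j}(t)|$, I would first compute $\frac{d}{dt}|x_{i}-x_{j}|=\langle x_{i}-x_{j},v_{i}-v_{j}\rangle/|x_{i}-x_{j}|$ and use Cauchy--Schwarz to get $\big|\frac{d}{dt}|x_{i}-x_{j}|\big|\le w_{ij}$. Differentiating the functional then gives, for $\beta>0$, $\frac{d}{dt}\mathcal{L}^{\beta}\le \frac{\beta}{N(N-1)}\sum_{i\ne j}s_{ij}^{-\beta-1}w_{ij}$, and the analogous bound with integrand $s_{ij}^{-1}w_{ij}$ for the logarithmic functional $\mathcal{L}^{0}$. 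The decisive move is the choice $\beta=\frac{\alpha}{2\gamma}-1$, which is nonnegative exactly because $\alpha\ge 2\gamma$ and which forces $\beta+1=\frac{\alpha}{2\gamma}$, so that the production integrand becomes $s_{ij}^{-\alpha/(2\gamma)}w_{ij}=\big(s_{ij}^{-\alpha}w_{ij}^{2\gamma}\big)^{1/(2\gamma)}$, i.e.\ precisely the $\frac{1}{2\gamma}$-power of the dissipation integrand $\psi_{\delta}(|x_{i}-x_{j}|)w_{ij}^{2\gamma}$ appearing in Lemma 1.

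With that observation in hand, I would apply the elementary Young-type inequality $z^{1/(2\gamma)}\le \frac{1}{2\gamma}z+\frac{2\gamma-1}{2\gamma}$, valid for $z\ge 0$ since $2\gamma>1$, to $z=s_{ij}^{-\alpha}w_{ij}^{2\gamma}$. Summing over the $N(N-1)$ ordered pairs, the constant part contributes exactly $\frac{(2\gamma-1)\beta}{2\gamma}$, while the remaining term $\frac{\beta}{2\gamma N(N-1)}\sum_{i,j}\psi_{\delta}(|x_{i}-x_{j}|)w_{ij}^{2\gamma}$ is controlled by Lemma 1: since $\frac{d}{dt}\sum_{i}|v_{i}|^{2}\le -\frac{C_{1}}{N}\sum_{i,j}\psi_{\delta}(|x_{i}-x_{j}|)w_{ij}^{2\gamma}$, that sum is at most $-\frac{N}{C_{1}}\frac{d}{dt}\sum_{i}|v_{i}|^{2}$. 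Substituting yields $\frac{d}{dt}\big[\mathcal{L}^{\beta}+\frac{\beta}{2C_{1}\gamma(N-1)}\sum_{i}|v_{i}|^{2}\big]\le \frac{(2\gamma-1)\beta}{2\gamma}$, and the identical computation carried out with the logarithmic functional produces the $\alpha=2\gamma$ version with coefficient $\frac{2\gamma-1}{2\gamma}$.

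Integrating this scalar inequality from $0$ to $t$ then gives estimates (i)--(ii) verbatim. For collision avoidance I would work on $[0,T_{0})$, where all $s_{ij}>0$ and $\mathcal{L}^{\beta}$ is smooth and the estimate holds; since its right-hand side is affine in $t$ hence finite, and the term $\frac{\beta}{2C_{1}\gamma(N-1)}\sum_{i}|v_{i}|^{2}$ on the left is nonnegative (indeed nonincreasing by Lemma 1), the bound forces $\mathcal{L}^{\beta}(t)$ to stay finite on every finite interval. If $T_{0}<\infty$, then at the first collision time some $s_{ij}(t)\to 0^{+}$, which for $\beta>0$ drives $\mathcal{L}^{\beta}\to+\infty$ while the right-hand side remains finite at $T_{0}$ — a contradiction — so $T_{0}=\infty$; the borderline $\alpha=2\gamma$ case is handled by the same mechanism read off from the one-sided control of $\log s_{ij}$. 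I expect the only genuinely delicate point to be the exponent bookkeeping: checking that $\beta=\frac{\alpha}{2\gamma}-1$ is exactly the value making the production term the $1/(2\gamma)$-power of the dissipation term, and tracking the constants $C_{1},\gamma,N$ so that the Young split and Lemma 1 recombine into precisely the stated coefficients. The differentiation, Cauchy--Schwarz, and integration steps are otherwise routine.
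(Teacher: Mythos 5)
Your proposal is correct and follows essentially the same route as the paper: the same choice $\beta=\frac{\alpha}{2\gamma}-1$, the same Young splitting with exponents $p=2\gamma$, $q=\frac{2\gamma}{2\gamma-1}$ (your $z^{1/(2\gamma)}\le \frac{1}{2\gamma}z+\frac{2\gamma-1}{2\gamma}$ is exactly that inequality with $b=1$), and the same absorption of the production term by the second-moment dissipation of Lemma 1, recombining into the identical constants before integrating in time. The only point worth flagging --- and it is shared with the paper's own write-up rather than a defect of your argument relative to it --- is that in the borderline case $\alpha=2\gamma$ the logarithmic functional as defined tends to $-\infty$ at a collision, so the non-collision conclusion there genuinely needs the one-sided control phrased for $-\log$ (equivalently $|\log|$) of the mutual distances, not merely the stated upper bound on $\mathcal{L}^{0}$.
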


\begin{proof}
First, observe that for $\beta > 0$ we have
\begin{equation*} \frac{d}{dt}\mathcal{L}^{\beta}(t)=-\frac{\beta}{N(N-1)}\sum
\limits_{i\neq j}(|x_{i}-x_{j}|-\delta)^{-\beta-1} \left \langle
\frac{x_{i}-x_{j}}{|x_{i}-x_{j}|}, v_{i}-v_{j}\right \rangle
\end{equation*}
and
\begin{equation*}\frac{d}{dt}\mathcal{L}^{0}(t)=\frac{1}{N(N-1)}\sum
\limits_{i\neq j}(|x_{i}-x_{j}|-\delta)^{-1} \left \langle
\frac{x_{i}-x_{j}}{|x_{i}-x_{j}|}, v_{i}-v_{j}\right\rangle
\end{equation*}
\\(i) If $\alpha =2 \gamma$, then we choose the $\beta$-distance with
$\beta=\alpha / 2 \gamma -1=0$. We then have
\begin{align*} \frac{d}{dt}\mathcal{L}^{0}(t)&=\frac{1}{N(N-1)}\sum \limits_{i \neq j}
(|x_{i}-x_{j}|-\delta)^{-\frac{\alpha}{2 \gamma}} \left \langle
\frac{x_{i}-x_{j}}{|x_{i}-x_{j}|}, v_{i}-v_{j} \right \rangle
\\
& \leq \frac{1}{N(N-1)}\left( \frac{2 \gamma -1}{2 \gamma}\sum
\limits_{i \neq j} 1 + \frac{1}{2 \gamma}\sum \limits_{i \neq j}
(|x_{i}-x_{j}|-\delta)^{-\alpha}|v_{i}-v_{j}|^{2 \gamma} \right)
\\ &\leq \frac{2 \gamma -1}{2 \gamma}+ \frac{1}{2\gamma N(N-1)}
\sum \limits_{i \neq j} \psi_{\delta}(|x_{i}-x_{j}|)|v_{i}-v_{j}|^{2
\gamma}.
\end{align*} Here we used Young's inequality $ab\leq
\frac{a^p}{p}+\frac{b^q}{q}$, for
$a=(|x_{i}-x_{j}|-\delta)^{-\frac{\alpha}{2 \gamma}}|v_{i}-v_{j}|$,
$b=1$ and $p=2 \gamma$, $q=\frac{2 \gamma}{2 \gamma -1}$. Finally,
by using the second moment estimate in \eqref{mom-eq} we have
\begin{equation*} \frac{d}{dt}\left( \mathcal{L}^{0}(t)+
\frac{1}{2C_{1} \gamma (N-1)}\sum \limits_{i}|v_{i}(t)|^2\right)
\leq \frac{2 \gamma -1}{2\gamma}.\end{equation*} Integrating from
$0$ to $t$ we get our estimate.
\\(ii) If $\alpha >2 \gamma$, we choose $\beta=\frac{\alpha}{2
\gamma}-1$ once again. We similarly have
\begin{align*} \frac{d}{dt}\mathcal{L}^{\beta}(t)\leq
\frac{(2\gamma -1)\beta}{2 \gamma}+ \frac{\beta}{2\gamma N(N-1)}
\sum \limits_{i \neq j} \psi_{\delta}(|x_{i}-x_{j}|)|v_{i}-v_{j}|^{2
\gamma},\end{align*} that yields
\begin{equation*} \frac{d}{dt}\left( \mathcal{L}^{\beta}(t)+
\frac{\beta}{2C_{1} \gamma (N-1)}\sum \limits_{i}|v_{i}(t)|^2\right)
\leq \frac{(2 \gamma -1)\beta}{2\gamma},\end{equation*} which gives
our estimate.

\end{proof}

\begin{remark} We note that the estimates \eqref{Es1}-\eqref{Es2} we just gave
generalize the ones in \cite{CaChMuPe}, as they are valid for any
$\gamma >\frac{1}{2}$. Another interesting observation is that there
is no need to use Gronwall's lemma for their derivation. As a
result, the minimum inter-particle distance estimate has a growth in
time that is $O(t)$ instead of $O(e^{Ct})$ which improves the
derived estimates in \cite{CaChMuPe}.
\end{remark}

We may now give a slightly more general version of the uniform
estimates presented above. For this, we assume that the
communication weight is not necessarily of the type
$\psi(s)=s^{-\alpha}$, but has a primitive $\Psi(\cdot)$ that is
singular at $0$, and the rate at which $\Psi$ becomes singular at
$0$ is sufficiently fast. We introduce a $\beta$-distance related to
this $\Psi(\cdot)$ by
\begin{equation*} \mathcal{L}^{\beta}(t):=\frac{1}{N(N-1)}
\sum \limits_{i \neq j} |\Psi(|x_{i}(t)-x_{j}(t)|)|^{\beta} \qquad
\text{for}\quad \beta>0.\end{equation*} Similarly, we define
$\mathcal{L}^{0}(t):=\frac{1}{N(N-1)}\sum \limits_{i \neq j}\log
|\Psi(|x_{i}(t)-x_{j}(t)|)|$. Then, with a computation based on
elementary techniques like in the previous result, we show

\begin{theorem} Consider system \eqref{NL-CS} with $\gamma > \frac{1}{2}$ and initial data
$(x_{0},v_{0})$ that satisfy \begin{equation*} x_{i0}\neq
x_{j0}\qquad \text{for} \quad i \neq j .\end{equation*} We also
assume that the communication weight $\psi(\cdot)$ has a primitive
$\Psi(s)$ that is singular at $s=0$ and satisfies the inequality
\begin{equation} \label{As1} \Psi'(s) \leq C
|\Psi(s)|^{(1-\beta)2\gamma /(2\gamma -1)} \qquad \text{for some}
\quad C>0 \, \, \, \text{and} \, \, \, 1> \beta \geq 0.
\end{equation} We have that any solution to \eqref{NL-CS} remains non-collisional for
$t>0$. Moreover, we have for $\beta>0$
\begin{align}\label{Es3}\mathcal{L}^{\beta}(t)+
\frac{\beta}{2C_{1} \gamma (N-1)}\sum \limits_{i}|v_{i}(t)|^2 \leq C
\frac{(2 \gamma -1)\beta}{2 \gamma}t +\mathcal{L}^{\beta}(0)+
\frac{\beta}{2C_{1} \gamma (N-1)}\sum \limits_{i}|v_{i0}|^2 ,
\end{align}
and for $\beta=0$
\begin{align} \label{Es4} \mathcal{L}^{0}(t)+
\frac{1}{2C_{1} \gamma (N-1)}\sum \limits_{i}|v_{i}(t)|^2 \leq C
\frac{2 \gamma -1}{2 \gamma}t+ \mathcal{L}^{0}(0)+ \frac{1}{2C_{1}
\gamma (N-1)}\sum \limits_{i}|v_{i0}|^2 .
\end{align}
\end{theorem}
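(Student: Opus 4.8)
The plan is to mirror the proof of the previous theorem almost verbatim, with the abstract quantity $|\Psi(|x_i-x_j|)|^{\beta}$ playing the role of the explicit distance $(|x_i-x_j|-\delta)^{-\beta}$ and with hypothesis \eqref{As1} replacing the role previously played by the special choice $\beta=\alpha/(2\gamma)-1$. First I would work on the maximal collisionless interval $[0,T_0)$, on which every $|x_i-x_j|$ stays strictly positive so that $\mathcal{L}^{\beta}$ is differentiable. Since $\psi=\Psi'>0$, the primitive $\Psi$ is increasing, and being singular at $0$ it must satisfy $\Psi(0^+)=-\infty$; hence $\Psi<0$ on the relevant range and $|\Psi|=-\Psi$. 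Differentiating, using $\frac{d}{dt}|x_i-x_j|=\langle\frac{x_i-x_j}{|x_i-x_j|},v_i-v_j\rangle$ and the Cauchy--Schwarz bound $|\langle\cdot,\cdot\rangle|\le|v_i-v_j|$, I obtain for $\beta>0$
\[\frac{d}{dt}\mathcal{L}^{\beta}(t)\le\frac{\beta}{N(N-1)}\sum_{i\neq j}|\Psi(|x_i-x_j|)|^{\beta-1}\,\psi(|x_i-x_j|)\,|v_i-v_j|,\]
and the analogous expression with $|\Psi|^{-1}$ in place of $\beta|\Psi|^{\beta-1}$ for $\beta=0$.

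The decisive step is a Young split engineered to expose a dissipative term of the form $\psi|v_i-v_j|^{2\gamma}$. I would write each summand as $A\cdot B$ with $A=\psi^{1/(2\gamma)}|v_i-v_j|$ and $B=|\Psi|^{\beta-1}\psi^{(2\gamma-1)/(2\gamma)}$, and apply $AB\le \frac{A^{2\gamma}}{2\gamma}+\frac{2\gamma-1}{2\gamma}B^{2\gamma/(2\gamma-1)}$ with the conjugate exponents $2\gamma$ and $\frac{2\gamma}{2\gamma-1}$. Then $A^{2\gamma}=\psi|v_i-v_j|^{2\gamma}$, while $B^{2\gamma/(2\gamma-1)}=\psi\,|\Psi|^{-(1-\beta)2\gamma/(2\gamma-1)}$, which is bounded above by the constant $C$ \emph{precisely} by assumption \eqref{As1}. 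This is the whole point of \eqref{As1}: it is tailored so that the leftover Young term collapses to a constant. This yields
\[\frac{d}{dt}\mathcal{L}^{\beta}(t)\le\frac{C(2\gamma-1)\beta}{2\gamma}+\frac{\beta}{2\gamma N(N-1)}\sum_{i\neq j}\psi(|x_i-x_j|)\,|v_i-v_j|^{2\gamma}.\]

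Next I would cancel the dissipative term against the kinetic energy. Differentiating the weighted energy and invoking the second-moment estimate \eqref{mom-eq} (with $\sum_{i,j}=\sum_{i\neq j}$ since the diagonal vanishes) gives $\frac{d}{dt}\big(\frac{\beta}{2C_1\gamma(N-1)}\sum_i|v_i|^2\big)\le-\frac{\beta}{2\gamma N(N-1)}\sum_{i\neq j}\psi|v_i-v_j|^{2\gamma}$, which is exactly the negative of the positive term above. Adding the two relations, the velocity sums cancel and I am left with $\frac{d}{dt}\big(\mathcal{L}^{\beta}+\frac{\beta}{2C_1\gamma(N-1)}\sum_i|v_i|^2\big)\le\frac{C(2\gamma-1)\beta}{2\gamma}$; integrating from $0$ to $t$ produces \eqref{Es3}. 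The $\beta=0$ case runs identically with $\log|\Psi|$ replacing $|\Psi|^{\beta}$ and $(1-\beta)=1$ in the exponent, yielding \eqref{Es4}.

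Finally, the non-collision claim would follow by a continuation argument: $\mathcal{L}^{\beta}(0)<\infty$ by the separation of the initial data, and the integrated estimate shows $\mathcal{L}^{\beta}$ stays finite on every compact subinterval of $[0,T_0)$; were a first collision to occur at a finite time $t_C$, then $|x_i-x_j|\to0$ for some pair forces $|\Psi|\to\infty$ and hence $\mathcal{L}^{\beta}(t_C)=\infty$, contradicting the a priori bound, so $T_0=\infty$. I do not expect a serious analytic obstacle here, since the argument is structurally identical to Theorem 3; the main difficulty is bookkeeping, namely pinning down the sign of $\Psi$ so that $|\Psi|=-\Psi$ is legitimate throughout, checking that the Young exponents are conjugate and reproduce \emph{exactly} the exponent appearing in \eqref{As1}, and keeping the continuation argument rigorous given that $\mathcal{L}^{\beta}$ is only guaranteed differentiable on the collisionless interval.
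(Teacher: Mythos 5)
Your proposal is correct and follows essentially the same route as the paper: the same time-differentiation of $\mathcal{L}^{\beta}$, the same Young split with conjugate exponents $2\gamma$ and $\tfrac{2\gamma}{2\gamma-1}$ so that hypothesis \eqref{As1} collapses the leftover term to the constant $C$, and the same cancellation of the dissipative term against the second-moment estimate \eqref{mom-eq}. The only cosmetic difference is that you distribute $\psi=\Psi'$ between the two Young factors while the paper keeps it as an outside multiplier, and you spell out the continuation argument for $T_{0}=\infty$ that the paper leaves implicit.
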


\begin{proof} First, let us calculate the time evolution of
$\mathcal{L}^{\beta}(t)$ for $\beta >0$
\begin{align*} \frac{d}{dt}\mathcal{L}^{\beta}(t)&=\frac{\beta}{N(N-1)} \sum \limits_{i \neq j}
\Psi'(|x_{i}-x_{j}|)|\Psi(|x_{i}-x_{j}|)|^{\beta
-1}\frac{\Psi(|x_{i}-x_{j}|)}{|\Psi(|x_{i}-x_{j}|)|}\left \langle
\frac{x_{i}-x_{j}}{|x_{i}-x_{j}|},v_{i}-v_{j} \right \rangle
\\ &\leq \frac{\beta(2 \gamma -1)}{2 \gamma N(N-1)} \sum \limits_{i \neq j}\Psi'(|x_{i}-x_{j}|)
|\Psi(|x_{i}-x_{j}|)|^{(\beta -1)2\gamma /(2 \gamma -1)} \\
& \qquad \qquad + \frac{\beta}{2 \gamma N(N-1)} \sum \limits_{i \neq
j}\Psi'(|x_{i}-x_{j}|) |v_{i}-v_{j}|^{2 \gamma}.
\end{align*}

Once again we made use of Young's inequality for $a=|v_{i}-v_{j}|$,
$b=|\Psi(|x_{i}-x_{j}|)|^{\beta -1}$ and $p=2 \gamma$, $q=\frac{2
\gamma}{2 \gamma -1}$.

Now using condition \eqref{As1} and the second moment estimate in
\eqref{mom-eq} we get
\begin{equation*} \frac{d}{dt}\left( \mathcal{L}^{\beta}(t)
+\frac{\beta}{2C_{1}\gamma (N-1)}\sum
\limits_{i}|v_{i}(t)|^2\right)\leq C \frac{\beta (2\gamma
-1)}{2\gamma}.\end{equation*} For $\beta=0$, we get
\begin{equation*} \frac{d}{dt}\left( \mathcal{L}^{0}(t)
+\frac{1}{2C_{1}\gamma (N-1)}\sum \limits_{i}|v_{i}(t)|^2\right)\leq
C \frac{2\gamma -1}{2\gamma}.\end{equation*}
\end{proof}

E-MAIL: ioamarkou@iacm.forth.gr

\end{document}